\documentclass[oneside, reqno]{amsart}

%% H/T: https://tex.stackexchange.com/a/226826/32270
\usepackage{amsaddr}

\usepackage[margin=1in]{geometry}

\usepackage{amsthm,amssymb,amsmath,graphicx}
\usepackage{bm}
%% https://tex.stackexchange.com/a/396829/32270
\allowdisplaybreaks

\usepackage[usenames, dvipsnames]{color}
\usepackage{hyperref}
\hypersetup{
  colorlinks=true,
  urlcolor=blue,
  linkcolor=MidnightBlue,
  citecolor=ForestGreen,
  pdfinfo={
    CreationDate={D:20180822233421},
    ModDate={D:20180822233421},
  },
}

\usepackage{embedfile}
\embedfile{\jobname.tex}

%% H/T: https://tex.stackexchange.com/a/28334/32270
\usepackage{chngcntr}
\counterwithin{figure}{section}
\counterwithin{equation}{section}
%% H/T: https://tex.stackexchange.com/a/202047/32270
%%      https://tex.stackexchange.com/a/32463/32270
\usepackage[labelfont=bf]{caption}

\theoremstyle{definition}
\newtheorem{theorem}{Theorem}[section]

\newcommand{\reals}{\mathbf{R}}

\newcommand{\bigO}[1]{\mathcal{O}\left(#1\right)}
\newcommand{\mach}{\mathbf{u}}
\newcommand{\utri}{\mathcal{U}}

\newcommand{\rev}[1]{#1}

\begin{document}

\title{High-order Solution Transfer between Curved Triangular Meshes}

\author{Danny Hermes}
\email{dhermes@berkeley.edu}

\author{Per-Olof Persson}
\address{UC Berkeley, 970 Evans Hall \#3840, Berkeley, CA 94720-3840 USA}
\email{persson@berkeley.edu}

\begin{abstract}
\noindent The problem of solution transfer between meshes arises frequently in
computational physics, e.g. in Lagrangian methods where remeshing
occurs. The interpolation process must be conservative, i.e. it
must conserve physical properties, such as mass. We extend previous
works --- which described the solution transfer process for straight sided
unstructured meshes --- by considering high-order isoparametric meshes
with curved elements. To facilitate solution transfer, we numerically
integrate the product of shape functions via Green's theorem along the
boundary of the intersection of two curved elements. We perform a numerical
experiment and confirm the expected accuracy by transferring test fields
across two families of meshes.
\\ \\
\noindent \emph{Keywords}: Remapping, Curved Meshes, Lagrangian,
Solution Transfer, Discontinuous Galerkin
\end{abstract}

\maketitle

\section{Introduction}

Solution transfer between meshes is a common problem that
has practical applications to many methods in
computational physics. \rev{For example, by allowing the underyling
computational domain to change during a simulation, computational
effort can be focused dynamically to resolve relevant features
of a numerical solution. This so-called mesh adaptivity typically
requires translating the numerical solution from the old mesh to the new,
\rev{i.e. solution transfer \cite{Babuska1978,Peraire1987,Pain2001,Iske2004}}}. As another example, Lagrangian or particle-based
methods treat each node in the mesh as a particle and so with each timestep the
\rev{mesh travels \emph{with} the fluid \cite{MR3023731}.
However, over (typically short) time the mesh
becomes distorted and suffers a loss in element quality which eventually causes
inaccurate solutions.} To overcome this, the
domain must be remeshed or rezoned and the solution must be
transferred (remapped) onto the new mesh configuration. \rev{A related more general
class of methods is the Arbitrary Lagrangian-Eulerian (ALE) method, which
are not constrained by a fixed computational mesh (the Eulerian approach) or
by a fixed fluid flow (the Lagrangian approach) and typically combine the benefits
of both approaches \cite{Hirt1974,MR3302403}.}

Solution transfer is needed when a solution
(approximated by a discrete field) is known on a \emph{donor} mesh and must
be transferred to a \emph{target} mesh.
When pointwise interpolation is used to transfer a solution, quantities with
physical meaning (e.g. mass, concentration, energy) may not be conserved.
In many applications, the field
must be conserved for physical reasons, e.g. mass or energy cannot leave or
enter the system, hence this paper focuses on \emph{conservative} solution transfer
(typically using Galerkin or \(L_2\)-minimizing methods).

The problem of conservative interpolation
has been considered previously for straight sided meshes.
However, both to allow for greater
geometric flexibility and for high order of convergence, this paper
addresses the case of curved isoparametric meshes. \rev{This is based on the
recent interest in so-called high-order methods \cite{Wang2013}, which have the ability to
produce highly accurate solutions with low dissipation and low dispersion error.
However, these methods typically require curved meshes to obtain the high accuracy.}

The \emph{common refinement} approach in
\cite{Jiao2004} is used to compare several methods for solution transfer across
two meshes. The problem of constructing such a refinement is
considered in \cite{Farrell2009, Farrell2011} (called a supermesh by
the authors).
However, the solution transfer becomes considerably more challenging for curved
meshes. For a sense of the difference between the straight sided and curved
cases, consider the problem of intersecting an element from the donor mesh
with an element from the target mesh. If the elements are triangles, the
intersection is either a convex polygon or has measure zero. If the elements
are curved, the intersection can be non-convex and can even split into
multiple disjoint regions. In \cite{Qiu2018}, a related problem is considered
where squares are allowed to curve along characteristics and the solution is
remapped onto a regular square grid.

Conservative solution transfer has been around since the advent of ALE,
and as a result much of the existing literature focuses on mesh-mesh
pairs that will occur during an ALE-based simulation. \rev{Many ALE methods
modify the mesh during simulation: when flow-based
mesh distortion occurs, elements are typically ``flipped'' (e.g. a
diagonal is switched in a pair of elements) or elements are subdivided
or combined.} These operations are inherently local, hence the solution
transfer can be done locally across known neighbors. Typically, this
locality is crucial to solution transfer methods. In \cite{Margolin2003},
the transfer is based on partitioning elements of the updated mesh into
components of elements from the old mesh and ``swept regions'' from
neighbouring elements. In \cite{Kucharik2008}, the (locally) changing
connectivity of the mesh is addressed. In \cite{Garimella2007}, the
local transfer is done on polyhedral meshes.

Global solution transfer instead seeks to conserve the solution across
the whole mesh. It makes no assumptions about the relationship between
the donor and target meshes. The loss in local information makes
the mesh intersection problem more computationally expensive, but the
added flexibility reduces timestep restrictions since it allows remeshing
to be done less often. In \cite{Dukowicz1984, Dukowicz1987}, a global
transfer is enabled by transforming volume integrals to surface integrals
via the divergence theorem to reduce the complexity of the problem.

\rev{In addition to conservation, there are many other considerations for a
solution transfer method that are important in various applications. For
example, since our focus is on high-order methods, these typically produce
oscillations in the solutions that can be non-physical and break the numerical
solvers. This can happen for the standard Galerkin projections that we
consider here, which means that e.g. discontinuities such as shocks in the
solution could become nonphysical after our solution transfer. Various
solutions to this have been proposed before, often referred to as limiting or
bound/monotonicity preserving methods, see e.g. \cite{MR3302403,Anderson2018}
where several approaches are studied including a high-order extension of the
flux-corrected transport (FCT) method. Other considerations for a solution
transfer scheme include the preservation of additional constraints, such
as the divergence or the curl of the solution.}

In this paper, an algorithm for conservative solution transfer between curved
meshes will be described. This method applies to meshes in \(\reals^2\).
Application to meshes in \(\reals^3\) is a direction for future research,
though the geometric kernels (see Chapter~\ref{sec:bezier-intersection})
become significantly more challenging to describe and implement. In
addition, the method will assume that every element in the
target mesh is contained in the donor mesh. This ensures that the solution
transfer is \rev{\emph{interpolative}}. In the case where all target elements are
partially covered, \emph{extrapolation} could be used to extend a solution
outside the domain, but for totally uncovered elements there is no clear
correspondence to elements in the donor mesh.

This paper is organized as follows. Section~\ref{sec:preliminaries}
establishes common notation and reviews basic results relevant to the
topics at hand. Section~\ref{sec:bezier-intersection} is an
in-depth discussion of the computational geometry methods needed
to implement to enable solution transfer. Section~\ref{sec:galerkin-projection}
(and following)
describes the solution transfer process and gives results of some
numerical experiments confirming the rate of convergence.

\section{Preliminaries}\label{sec:preliminaries}

\subsection{General Notation}

We'll refer to \(\reals\) for the reals, \(\utri\) represents
the unit triangle (or unit simplex) in \(\reals^2\):
\begin{equation}
\utri = \left\{(s, t) \mid 0 \leq s, t, s + t \leq 1\right\}.
\end{equation}
When dealing with sequences with multiple indices, e.g.
\(s_{m, n} = m + n\), we'll use bold symbols to represent
a multi-index: \(\bm{i} = (m, n)\).
The binomial coefficient
\(\binom{n}{k}\) is equal to \(\frac{n!}{k! (n - k)!}\) and the trinomial
coefficient \(\binom{n}{i, j, k}\) is equal to \(\frac{n!}{i! j! k!}\)
(where \(i + j + k = n\)). The notation \(\delta_{ij}\) represents the
Kronecker delta, a value which is \(1\) when \(i = j\) and \(0\)
otherwise.

\subsection{B\'{e}zier Curves}

A \emph{B\'{e}zier curve} is a mapping from the unit interval
that is determined by a set of control points
\(\left\{\bm{p}_j\right\}_{j = 0}^n \subset \reals^d\).
For a parameter \(s \in \left[0, 1\right]\), there is a corresponding
point on the curve:
\begin{equation}
b(s) = \sum_{j = 0}^n \binom{n}{j} (1 - s)^{n - j} s^j \bm{p}_j \in
  \reals^d.
\end{equation}
This is a combination of the control points weighted by
each Bernstein basis function
\(B_{j, n}(s) = \binom{n}{j} (1 - s)^{n - j} s^j\).
Due to the binomial expansion
\(1 = (s + (1 - s))^n = \sum_{j = 0}^n B_{j, n}(s)\),
a Bernstein basis function is in
\(\left[0, 1\right]\) when \(s\) is as well. Due to this fact, the
curve must be contained in the convex hull of \rev{its} control points.

\subsection{B\'{e}zier Triangles}

A \emph{B\'{e}zier triangle} (\cite[Chapter~17]{Farin2001}) is a
mapping from the unit triangle
\(\utri\) and is determined by a control net
\(\left\{\bm{p}_{i, j, k}\right\}_{i + j + k = n} \subset \reals^d\).
A B\'{e}zier triangle is a particular kind of B\'{e}zier surface, i.e. one
in which there are two cartesian or three barycentric input parameters.
Often the term B\'{e}zier surface is used to refer to a tensor product or
rectangular patch.
For \((s, t) \in \utri\) we can define barycentric weights
\(\lambda_1 = 1 - s - t, \lambda_2 = s, \lambda_3 = t\) so that
\begin{equation}
1 = \left(\lambda_1 + \lambda_2 + \lambda_3\right)^n =
  \sum_{\substack{i + j + k = n \\ i, j, k \geq 0}} \binom{n}{i, j, k}
  \lambda_1^i \lambda_2^j \lambda_3^k.
\end{equation}
Using this we can similarly define a (triangular) Bernstein basis
\begin{equation}
B_{i, j, k}(s, t) = \binom{n}{i, j, k} (1 - s - t)^i s^j t^k
  = \binom{n}{i, j, k} \lambda_1^i \lambda_2^j \lambda_3^k
\end{equation}
that is in \(\left[0, 1\right]\) when \((s, t)\) is in \(\utri\).
Using this, we define points on the B\'{e}zier triangle as a
convex combination of the control net:
\begin{equation}
b(s, t) = \sum_{i + j + k = n} \binom{n}{i, j, k}
  \lambda_1^i \lambda_2^j \lambda_3^k
  \bm{p}_{i, j, k} \in \reals^d.
\end{equation}

\begin{figure}
  \includegraphics{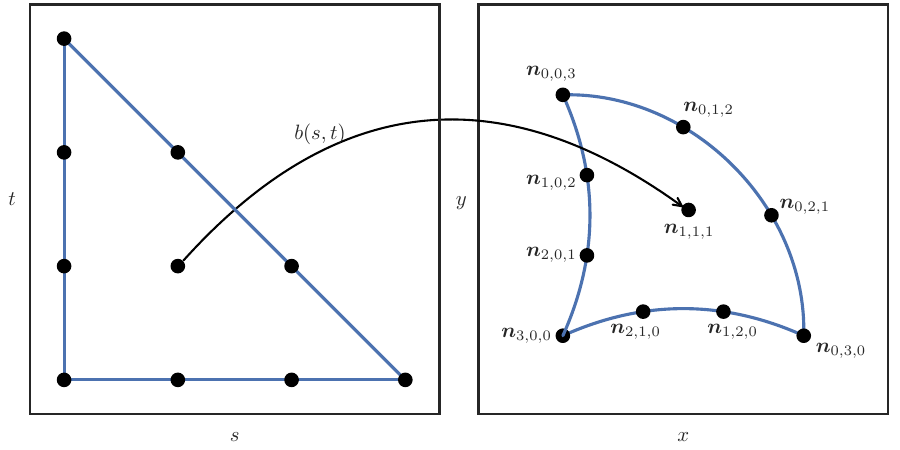}
  \centering
  \captionsetup{width=.75\linewidth}
  \caption{Cubic B\'{e}zier triangle}
  \label{fig:cubic-bezier-example}
\end{figure}

\noindent Rather than defining a B\'{e}zier triangle by the control net, it can
also be uniquely determined by the image of a standard lattice of
points in \(\utri\): \(b\left(j/n, k/n\right) = \bm{n}_{i, j, k}\);
we'll refer to these as \emph{standard nodes}.
Figure~\ref{fig:cubic-bezier-example} shows these standard nodes for
a cubic triangle in \(\reals^2\). To see the correspondence,
when \(p = 1\) the standard nodes \emph{are} the control net
\begin{equation}
b(s, t) = \lambda_1 \bm{n}_{1, 0, 0} +
\lambda_2 \bm{n}_{0, 1, 0} + \lambda_3 \bm{n}_{0, 0, 1}
\end{equation}
and when \(p = 2\)
\begin{multline}
b(s, t) = \lambda_1\left(2 \lambda_1 - 1\right) \bm{n}_{2, 0, 0} +
\lambda_2\left(2 \lambda_2 - 1\right) \bm{n}_{0, 2, 0} +
\lambda_3\left(2 \lambda_3 - 1\right) \bm{n}_{0, 0, 2} + \\
4 \lambda_1 \lambda_2 \bm{n}_{1, 1, 0} +
4 \lambda_2 \lambda_3 \bm{n}_{0, 1, 1} +
4 \lambda_3 \lambda_1 \bm{n}_{1, 0, 1}.
\end{multline}
However, it's worth noting that the transformation between
the control net and the standard nodes has condition
number that grows exponentially with \(n\) (see \cite{Farouki1991}, which
is related but does not directly show this).
This may make working with
higher degree triangles prohibitively unstable.

A \emph{valid} B\'{e}zier triangle is one which is
diffeomorphic to \(\utri\), i.e. \(b(s, t)\) is bijective and has
an \rev{invertible Jacobian everywhere}. We must also have the orientation
preserved, i.e. the Jacobian must have positive determinant. For example, in
Figure~\ref{fig:inverted-element}, the image of \(\utri\) under
the map \(b(s, t) = \left[\begin{array}{c c} (1 - s - t)^2 + s^2 & s^2 + t^2
\end{array}\right]^T\) is not valid because the Jacobian is zero along
the curve \(s^2 - st - t^2 - s + t = 0\) (the dashed line). Elements that
are not valid are called \emph{inverted} because they have regions with
``negative area''. For the example, the image \(b\left(\utri\right)\)
leaves the boundary determined by the edge curves: \(b(r, 0)\),
\(b(1 - r, r)\) and \(b(0, 1 - r)\) when \(r \in \left[0, 1\right]\).
This region outside the boundary is traced twice, once with
a positive Jacobian and once with a negative Jacobian.
\begin{figure}
  \includegraphics{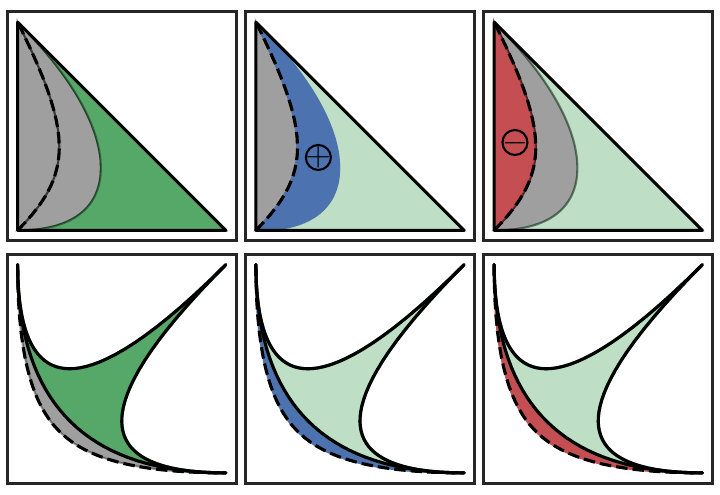}
  \centering
  \captionsetup{width=.75\linewidth}
  \caption{The B\'{e}zier triangle given by \(b(s, t) = \left[
    (1 - s - t)^2 + s^2 \; \; s^2 + t^2 \right]^T\) produces an
    inverted element. It traces the same region twice, once with
    a positive Jacobian (the middle column) and once with a negative
    Jacobian (the right column).}
  \label{fig:inverted-element}
\end{figure}

\subsection{Curved Elements}\label{sec:curved-elements}

We define a curved mesh element \(\mathcal{T}\) of degree \(p\)
to be a B\'{e}zier triangle in \(\reals^2\) of the same degree.
We refer to the component functions of \(b(s, t)\) (the map that
gives \(\mathcal{T} = b\left(\utri\right)\)) as \(x(s, t)\) and \(y(s, t)\).

This fits a typical definition (\cite[Chapter~12]{FEM-ClaesJohnson})
of a curved element, but gives a special meaning to the mapping from
the reference triangle. Interpreting elements as B\'{e}zier triangles
has been used for Lagrangian methods where
mesh adaptivity is needed (e.g. \cite{CardozeMOP04}). Typically curved
elements only have one curved side (\cite{McLeod1972}) since they are used
to resolve geometric features of a boundary. See also
\cite{Zlmal1973, Zlmal1974}.
B\'{e}zier curves and triangles have a number of mathematical properties
(e.g. the convex hull property) that lead to elegant geometric
descriptions and algorithms.

Note that a B\'{e}zier triangle can be
determined from many different sources of data (for example the control net
or the standard nodes). The choice of this data may be changed to suit the
underlying physical problem without changing the actual mapping. Conversely,
the data can be fixed (e.g. as the control net) to avoid costly basis
conversion; once fixed, the equations of motion and other PDE terms can
be recast relative to the new basis (for an example, see \cite{Persson2009},
where the domain varies with time but the problem is reduced to
solving a transformed conservation law in a fixed reference configuration).

\subsection{Shape Functions}\label{subsec:shape-functions}

When defining shape functions (i.e. a basis with geometric meaning) on a
curved element there are (at least) two choices. When the degree of the
shape functions is the same as the degree of the function being
represented on the B\'{e}zier triangle,
we say the element \(\mathcal{T}\) is \emph{isoparametric}.
For the multi-index
\(\bm{i} = (i, j , k)\), we define \(\bm{u}_{\bm{i}} =
\left(j/n, k/n\right)\) and the corresponding standard node
\(\bm{n}_{\bm{i}} = b\left(\bm{u}_{\bm{i}}\right)\).
Given these points, two choices for shape functions present
themselves:
\begin{itemize}
  \itemsep 0em
  \item \emph{Pre-Image Basis}:
    \(\phi_{\bm{j}}\left(\bm{n}_{\bm{i}}\right) =
      \widehat{\phi}_{\bm{j}}\left(\bm{u}_{\bm{i}}\right) =
      \widehat{\phi}_{\bm{j}}\left(b^{-1}\left(
      \bm{n}_{\bm{i}}\right)\right)\)
    where \(\widehat{\phi}_{\bm{j}}\) is a canonical basis function
    on \(\utri\), i.e.
    \(\widehat{\phi}_{\bm{j}}\) a degree \(p\) bivariate polynomial and
    \(\widehat{\phi}_{\bm{j}}\left(\bm{u}_{\bm{i}}\right) =
    \delta_{\bm{i} \bm{j}}\)
  \item \emph{Global Coordinates Basis}:
    \(\phi_{\bm{j}}\left(\bm{n}_{\bm{i}}\right) =
    \delta_{\bm{i} \bm{j}}\), i.e. a canonical basis function
    on the standard nodes \(\left\{\bm{n}_{\bm{i}}\right\}\).
\end{itemize}

\noindent For example, consider a quadratic B\'{e}zier triangle:
\begin{gather}
b(s, t) = \left[ \begin{array}{c c}
    4 (s t + s + t) & 4 (s t + t + 1)
  \end{array}\right]^T \\
\Longrightarrow
\left[ \begin{array}{c c c c c c}
    \bm{n}_{2, 0, 0} &
    \bm{n}_{1, 1, 0} &
    \bm{n}_{0, 2, 0} &
    \bm{n}_{1, 0, 1} &
    \bm{n}_{0, 1, 1} &
    \bm{n}_{0, 0, 2}
  \end{array}\right] = \left[ \begin{array}{c c c c c c}
    0 & 2 & 4 & 2 & 5 & 4 \\
    4 & 4 & 4 & 6 & 7 & 8
  \end{array}\right].
\end{gather}
In the \emph{Global Coordinates Basis}, we have
\begin{equation}
\phi^{G}_{0, 1, 1}(x, y) = \frac{(y - 4) (x - y + 4)}{6}.
\end{equation}
For the \emph{Pre-Image Basis}, we need the inverse
and the canonical basis
\begin{equation}
b^{-1}(x, y) = \left[ \begin{array}{c c}
    \frac{x - y + 4}{4} & \frac{y - 4}{x - y + 8}
  \end{array}\right] \quad \text{and} \quad
\widehat{\phi}_{0, 1, 1}(s, t) = 4 s t
\end{equation}
and together they give
\begin{equation}
\phi^{P}_{0, 1, 1}(x, y) = \frac{(y - 4) (x - y + 4)}{x - y + 8}.
\end{equation}
In general \(\phi_{\bm{j}}^P\) may not even be a rational bivariate
function; due to composition with \(b^{-1}\) we can only guarantee that
it is algebraic (i.e. it can be defined as the zero set of polynomials
with coefficients in \(\reals\left[x, y\right]\)).

\subsection{Curved Polygons}\label{subsec:curved-polygons}

\begin{figure}
  \includegraphics{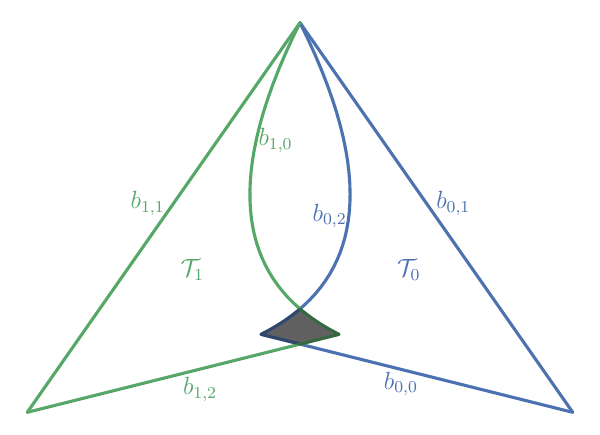}
  \centering
  \captionsetup{width=.75\linewidth}
  \caption{Intersection of B\'{e}zier triangles form a curved polygon.}
  \label{fig:bezier-triangle-intersect}
\end{figure}

When intersecting two curved elements, the resulting surface(s) will
be defined by the boundary, alternating between edges of each
element.
For example, in Figure~\ref{fig:bezier-triangle-intersect}, a
``curved quadrilateral'' is formed when two B\'{e}zier triangles
\(\mathcal{T}_0\) and \(\mathcal{T}_1\) are intersected.

A \emph{curved polygon} is defined by a collection of B\'{e}zier curves
in \(\reals^2\) that determine the boundary. In order to be
a valid polygon, none of the boundary curves may cross, the
ends of consecutive edge curves must meet and the curves must be right-hand
oriented. For our example in
Figure~\ref{fig:bezier-triangle-intersect}, the triangles
have boundaries formed by three B\'{e}zier curves:
\(\partial \mathcal{T}_0 = b_{0, 0} \cup b_{0, 1} \cup b_{0, 2}\) and
\(\partial \mathcal{T}_1 = b_{1, 0} \cup b_{1, 1} \cup b_{1, 2}\).
The intersection \(\mathcal{P}\) is defined by four boundary
curves: \(\partial \mathcal{P} =
C_1 \cup C_2 \cup C_3 \cup C_4\). Each boundary
curve is itself a B\'{e}zier curve\footnote{A specialization of a
B\'{e}zier curve \(b\left(\left[a_1, a_2\right]\right)\)
is also a B\'{e}zier curve.}:
\(C_1 = b_{0, 0}\left(\left[0, 1/8\right]\right)\),
\(C_2 = b_{1, 2}\left(\left[7/8, 1\right]\right)\),
\(C_3 = b_{1, 0}\left(\left[0, 1/7\right]\right)\) and
\(C_4 = b_{0, 2}\left(\left[6/7, 1\right]\right)\).

Though an intersection can be described in terms of the B\'{e}zier triangles,
the structure of the control net will be lost. The region will not in general
be able to be described by a mapping from a simple space like
\(\utri\).

\section{Galerkin Projection}\label{sec:galerkin-projection}

\subsection{Linear System for Projection}

\begin{figure}
  \includegraphics{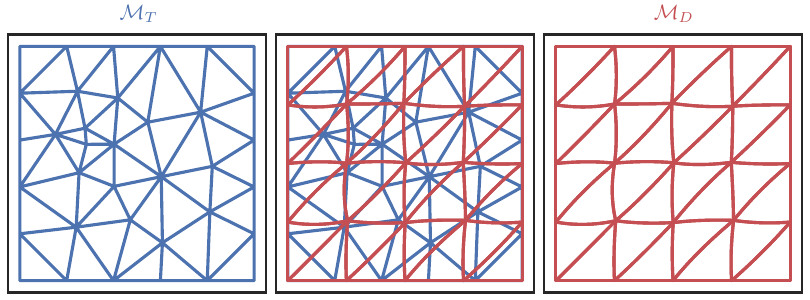}
  \centering
  \captionsetup{width=.75\linewidth}
  \caption{Mesh pair: donor mesh \(\mathcal{M}_D\) and
    target mesh \(\mathcal{M}_T\).}
  \label{fig:donor-target-pair}
\end{figure}

Consider a donor mesh \(\mathcal{M}_D\) with shape function basis
\(\phi_D^{(j)}\) and a known field \(\bm{q}_D = \sum_j d_j \phi_D^{(j)}\)
\footnote{This is somewhat a simplification. \rev{In CG (continuous Galerkin, as
opposed to DG or discontinuous Galerkin)}, some coefficients will be paired
with multiple shape functions, such as the coefficient at a vertex node.}
and a
target mesh \(\mathcal{M}_T\) with shape function basis \(\phi_T^{(j)}\)
(Figure~\ref{fig:donor-target-pair}).
Each shape function \(\phi\) corresponds to a
given isoparametric curved element (see Section~\ref{sec:curved-elements})
\(\mathcal{T}\) in one of these meshes and has
\(\operatorname{supp}(\phi) = \mathcal{T}\).
Additionally, the shape functions are polynomial degree \(p\) (see
Section~\ref{subsec:shape-functions} for a discussion of shape functions),
but the degree of the donor mesh need not be the same as that of the
target mesh. We assume that both meshes cover the same domain \(\Omega \subset
\reals^2\), however we really only require the donor mesh to cover the
target mesh.

We seek the \(L_2\)-optimal interpolant \(\bm{q}_T = \sum_j t_j \phi_T^{(j)}\):
\begin{equation}
\left \lVert \bm{q}_T - \bm{q}_D \right \rVert_2 =
\min_{\bm{q} \in \mathcal{V}_T}
\left \lVert \bm{q} - \bm{q}_D \right \rVert_2
\end{equation}
where \(\mathcal{V}_T = \operatorname{Span}_j\left\{\phi_T^{(j)}\right\}\)
is the function space defined on the target mesh. Since this is optimal in
the \(L_2\) sense, by differentiating with respect to each \(t_j\) in
\(\bm{q}_T\) we find the weak form:
\begin{equation}
\int_{\Omega} \bm{q}_D \phi_T^{(j)} \, dV =
  \int_{\Omega} \bm{q}_T \phi_T^{(j)} \, dV, \qquad \text{for all } j.
\end{equation}
If the constant function \(1\) is contained in \(\mathcal{V}_T\),
conservation follows from the weak form and linearity of the integral
\begin{equation}
\int_{\Omega} \bm{q}_D \, dV =
  \int_{\Omega} \bm{q}_T \, dV.
\end{equation}
Expanding \(\bm{q}_D\) and \(\bm{q}_T\) with respect to their coefficients
\(\bm{d}\) and \(\bm{t}\), the weak form gives rise to a linear system
\begin{equation}\label{eq:weak-form-system}
M_T \bm{t} = M_{TD} \bm{d}.
\end{equation}
Here \(M_T\) is the mass matrix for
\(\mathcal{M}_T\) given by
\begin{equation}
\left(M_T\right)_{ij} = \int_{\Omega} \phi_T^{(i)} \phi_T^{(j)} \, dV.
\end{equation}
In the discontinuous Galerkin case, \(M_T\) is block diagonal with blocks
that correspond to each element, so \eqref{eq:weak-form-system} can be
solved locally on each element \(\mathcal{T}\) in the target mesh. By
construction, \(M_T\) is symmetric and sparse since \(\left(M_T\right)_{ij}\)
will be \(0\) unless \(\phi_T^{(i)}\) and \(\phi_T^{(j)}\) are supported
on the same element \(\mathcal{T}\). In the continuous case, \(M_T\) is
globally coupled since coefficients corresponding to boundary nodes interact
with multiple elements. The matrix \(M_{TD}\) is a ``mixed'' mass matrix
between the target and donor meshes:
\begin{equation}
\left(M_{TD}\right)_{ij} = \int_{\Omega} \phi_T^{(i)} \phi_D^{(j)} \, dV.
\end{equation}
Boundary conditions can be imposed on the system by fixing some coefficients,
but that is equivalent to removing some of the basis functions which may
\rev{in turn} make the projection non-conservative. This is because the removed
basis functions may have been used in \(1 = \sum_j u_j \phi_{T}^{(j)}\).

Computing \(M_T\) is fairly straightforward since the (bidirectional) mapping
from elements \(\mathcal{T}\) to basis functions \(\phi_T^{(j)}\) supported
on those elements is known. When using shape functions in the
global coordinates basis (see Section~\ref{subsec:shape-functions}), the
integrand \(F = \phi_T^{(i)} \phi_T^{(j)}\) will be a polynomial of degree
\(2p\) on \(\reals^2\). The domain of integration \(\mathcal{T}
= b\left(\utri\right)\) is the image of a (degree \(p\)) map \(b(s, t)\)
from the unit triangle. Using substitution
\begin{equation}\label{eq:mass-mat-subst}
\int_{b\left(\utri\right)} F(x, y) \, dx \, dy =
  \int_{\utri} \det(Db) F\left(x(s, t), y(s, t)\right) \, ds \, dt
\end{equation}
(we know the map preserves orientation, i.e. \(\det(Db)\) is positive).
Once transformed this way, a quadrature rule on the unit
triangle (\cite{Dunavant1985}) can be used.

On the other hand, computing \(M_{TD}\) is significantly more
challenging. This requires solving both a geometric problem ---
finding the region to integrate over --- and an analytic
problem --- computing the integrals. The integration can be done with
a quadrature rule, though finding this region is significantly
more difficult.

\subsection{Common Refinement}

\begin{figure}
  \includegraphics{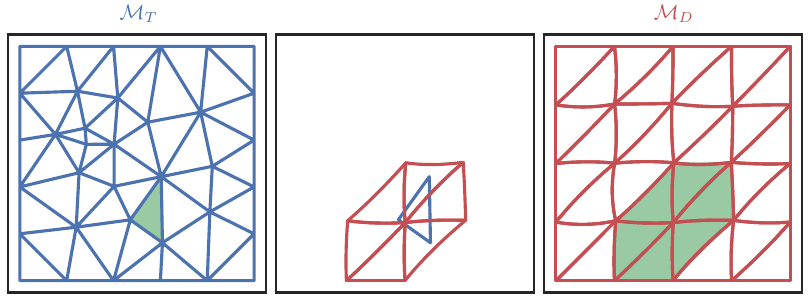}
  \centering
  \captionsetup{width=.75\linewidth}
  \caption{All donor elements that cover a target element}
  \label{fig:target-elt-all-matching}
\end{figure}

Rather than computing \(M_{TD}\), the right-hand side
of~\eqref{eq:weak-form-system} can be computed directly via
\begin{equation}\label{eq:lumped-mixed-mass-matrix}
\left(M_{TD} \bm{d}\right)_j = \int_{\Omega} \phi_T^{(j)} \bm{q}_D \, dV.
\end{equation}
Any given \(\phi\) is supported on an element \(\mathcal{T}\) in the
target mesh. Since \(\bm{q}_D\) is piecewise defined over each element
\(\mathcal{T}'\) in the donor mesh, the
integral~\eqref{eq:lumped-mixed-mass-matrix} may be problematic.
In the continuous Galerkin case, \(\bm{q}_D\) need not be differentiable
across \(\mathcal{T}\) and in the discontinuous Galerkin case,
\(\bm{q}_D\) need not even be continuous. This necessitates a
partitioning of the domain:
\begin{equation}
\int_{\Omega} \phi \, \bm{q}_D \, dV =
  \int_{\mathcal{T}} \phi \, \bm{q}_D \, dV =
  \sum_{\mathcal{T}' \in \mathcal{M}_D} \int_{\mathcal{T} \cap \mathcal{T}'}
    \phi \left.\bm{q}_D\right|_{\mathcal{T}'} \, dV.
\end{equation}
In other words, the integral over \(\mathcal{T}\) splits into integrals
over intersections \(\mathcal{T} \cap \mathcal{T}'\) for all
\(\mathcal{T}'\) in the donor mesh that intersect \(\mathcal{T}\)
(Figure~\ref{fig:target-elt-all-matching}). Since both \(\phi\) and
\(\left.\bm{q}_D\right|_{\mathcal{T}'}\) are polynomials on
\(\mathcal{T} \cap \mathcal{T}'\), the integrals will be exact when
using a quadrature scheme of an appropriate degree of accuracy.
Without partitioning \(\mathcal{T}\), the integrand is not a polynomial
(in fact, possibly not smooth), so the quadrature cannot be exact.

In order to compute \(M_{TD} \bm{d}\), we'll need to compute the
\emph{common refinement}, i.e. an intermediate mesh that contains
both the donor and target meshes. This will consist of all non-empty
\(\mathcal{T} \cap \mathcal{T}'\) as \(\mathcal{T}\) varies over
elements of the target mesh and \(\mathcal{T}'\) over elements of the
donor mesh. This requires solving three specific subproblems:
\begin{itemize}
%% H/T: https://tex.stackexchange.com/a/6086/32270
\itemsep 0em
\item Forming the region(s) of intersection between two elements that
  are B\'{e}zier triangles.
\item Finding all pairs of elements, one each from the target and donor mesh,
  that intersect.
\item Numerically integrating over a region of intersection between two
  elements.
\end{itemize}
The first subproblem is discussed in
Section~\ref{subsec:intersect-bez-tri}
and the curved polygon region(s) of intersection have been described
in Section~\ref{subsec:curved-polygons}. The second will be considered
in Section~\ref{subsec:expanding-front} and the third in
Section~\ref{subsec:integration-on-curved} below.

\subsection{Expanding Front}\label{subsec:expanding-front}

\begin{figure}
  \includegraphics{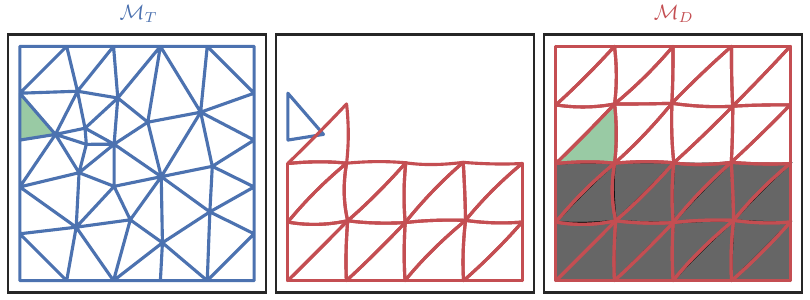}
  \centering
  \captionsetup{width=.75\linewidth}
  \caption{Brute force search for a donor element \(\mathcal{T}'\) that matches
    a fixed target element \(\mathcal{T}\).}
  \label{fig:target-elt-brute-force}
\end{figure}

We seek to identify all pairs \(\mathcal{T}\) and \(\mathcal{T}'\) of
intersecting target and donor elements. The na\"{i}ve approach just
considers every pair
of elements and takes \(\bigO{\left|\mathcal{M}_D\right|
\left|\mathcal{M}_T\right|}\) to complete.\footnote{For a mesh \(\mathcal{M}\),
the expression \(\left|\mathcal{M}\right|\) represents the number of elements
in the mesh.} Taking after \cite{Farrell2011}, we can do much better than this
quadratic time search. In fact, we can compute all integrals in
\(\bigO{\left|\mathcal{M}_D\right| + \left|\mathcal{M}_T\right|}\).
First, we fix an element of the target mesh and perform a brute-force search
to find an intersecting element in the donor mesh
(Figure~\ref{fig:target-elt-brute-force}). This has worst-case time
\(\bigO{\left|\mathcal{M}_D\right|}\).

\begin{figure}
  \includegraphics{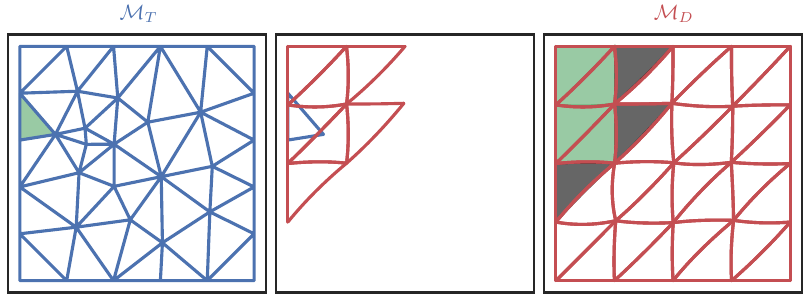}
  \centering
  \captionsetup{width=.75\linewidth}
  \caption{All donor elements \(\mathcal{T}'\) that cover a target element
    \(\mathcal{T}\), with an extra layer of neighbors in the donor mesh
    that \emph{do not} intersect \(\mathcal{T}\).}
  \label{fig:target-elt-overlap-extra-layer}
\end{figure}

Once we have such a match, we use the connectivity graph of the
donor mesh to perform a breadth-first search
for neighbors that also intersect the target
element \(\mathcal{T}\) (Figure~\ref{fig:target-elt-overlap-extra-layer}).
This search takes \(\bigO{1}\) time. It's also worthwhile to keep the first
layer of donor elements that don't intersect \(\mathcal{T}\) because they are
more likely to intersect the neighbors of \(\mathcal{T}\)\footnote{In the
very unlikely case that the boundary of \(\mathcal{T}\)
\emph{exactly matches} the boundaries of the donor elements that cover it,
\emph{none} of the overlapping donor elements can intersect the neighbors of
\(\mathcal{T}\) so the first layer of non-intersected donor
elements must be considered.}. Using the list of intersected elements, a
neighbor of \(\mathcal{T}\) can find a donor element it intersects with in
\(\bigO{1}\) time (Figure~\ref{fig:target-elt-neighbor}).
As seen, after our \(\bigO{\left|\mathcal{M}_D\right|}\)
initial brute-force search, the localized intersections for each
target element \(\mathcal{T}\) take
\(\bigO{1}\) time. So together, the process takes
\(\bigO{\left|\mathcal{M}_D\right| + \left|\mathcal{M}_T\right|}\).

For special cases, e.g. ALE methods, the initial brute-force search
can be reduced to \(\bigO{1}\). This could be enabled by tracking
the remeshing process so a correspondence already exists. If a full
mapping from donor to target mesh exists, the process of computing
\(M_T\) and \(M_{TD} \bm{d}\) can be fully parallelized across
elements of the target mesh, or even across shape functions
\(\phi_T^{(j)}\). \rev{Spatial binning techniques for mesh elements could also be
used to improve the initial brute-force search as in \cite{Weiss16_imr}.}

\begin{figure}
  \includegraphics{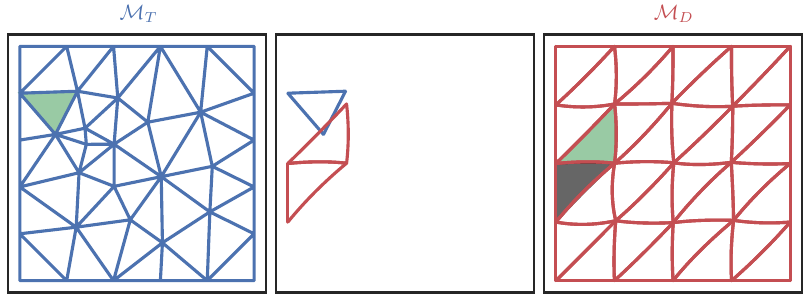}
  \centering
  \captionsetup{width=.75\linewidth}
  \caption{First match between a neighbor of the previously considered
    target element and all the donor elements that match the
    previously considered target element.}
  \label{fig:target-elt-neighbor}
\end{figure}

\subsection{Integration over Curved Polygons}
\label{subsec:integration-on-curved}

In order to numerically evaluate integrals of the form
\begin{equation}
\int_{\mathcal{T}_0 \cap \mathcal{T}_1} F(x, y) \, dV
\end{equation}
we must have a quadrature rule on these curved polygon
(Section~\ref{subsec:curved-polygons}) intersections
\(\mathcal{T}_0 \cap \mathcal{T}_1\).
To do this, we transform the integral into several line integrals
via Green's theorem and then use an exact Gaussian quadrature to
evaluate them. Throughout this section we'll assume the integrand
is of the form \(F = \phi_0 \phi_1\) where each \(\phi_j\) is a shape
function on \(\mathcal{T}_j\). In addition, we'll refer to the two
B\'{e}zier maps that define the elements being intersected:
\(\mathcal{T}_0 = b_0\left(\utri\right)\) and
\(\mathcal{T}_1 = b_1\left(\utri\right)\).

First a discussion of a method not used. A somewhat simple approach would be
be to use polygonal approximation. I.e. approximate the boundary of each
B\'{e}zier triangle with line segments, intersect the resulting polygons,
triangulate the intersection polygon(s) and then numerically integrate
on each triangulated cell. However, this approach is prohibitively inefficient.
For example, consider computing the area of \(\mathcal{P}\) via an
integral: \(\int_{\mathcal{P}} 1 \, dV\). By using the actual curved
boundaries of each element, this integral can be computed with relative
error on the order of machine precision \(\bigO{\mach}\). On the other hand,
approximating each side of \(\mathcal{P}\) with \(N\) line segments, the
relative error is \(\bigO{1/N^2}\). (For example, if \(N = 2\) an edge
curve \(b(s, 0)\) would be replaced by segments connecting \(b(0, 0),
b(1/2, 0)\) and \(b(1, 0)\), i.e. \(N + 1\) equally spaced parameters.) This
means that in order to perform as well as an \emph{exact} quadrature used in
the curved case, we'd need \(N = \bigO{1/\sqrt{\mach}}\).

Since polygonal approximation is prohibitively expensive,
we work directly with curved edges and compute integrals on a regular
domain via substitution.
If two B\'{e}zier triangles intersect with positive measure, then
the region of intersection is one or more disjoint curved polygons:
\(\mathcal{T}_0 \cap \mathcal{T}_1 = \mathcal{P}\) or
\(\mathcal{T}_0 \cap \mathcal{T}_1 = \mathcal{P} \cup
\mathcal{P}' \cup \cdots\).
The second case can be handled in the same way as the first by handling each
disjoint region independently:
\begin{equation}
\int_{\mathcal{T}_0 \cap \mathcal{T}_1} F(x, y) \, dV =
  \int_{\mathcal{P}} F(x, y) \, dV +
  \int_{\mathcal{P}'} F(x, y) \, dV + \cdots.
\end{equation}
Each curved polygon \(\mathcal{P}\) is defined by its boundary, a
piecewise smooth parametric curve:
\begin{equation}
\partial \mathcal{P} = C_1 \cup \cdots \cup C_n.
\end{equation}
This can be thought of as a polygon with \(n\) sides that happens to
have curved edges.

Quadrature rules for straight sided polygons have been
studied (\cite{Mousavi2009}) though they are not in wide use. Even
if a polygonal quadrature rule was to be employed, a map would need to be
established from a reference polygon onto the curved edges. This map
could then be used with a change of coordinates to move the integral
from the curved polygon to the reference polygon. The problem of extending
a mapping from a boundary to an entire domain has been studied as
transfinite interpolation
(\cite{chenin:tel-00284680, Gordon1982, Perronnet1998}),
barycentric coordinates (\cite{Wachspress1975}) or mean value coordinates
(\cite{Floater2003}). However, these maps aren't typically suitable for
numerical integration because they are either not bijective or increase the
degree of the edges.

\begin{figure}
  \includegraphics{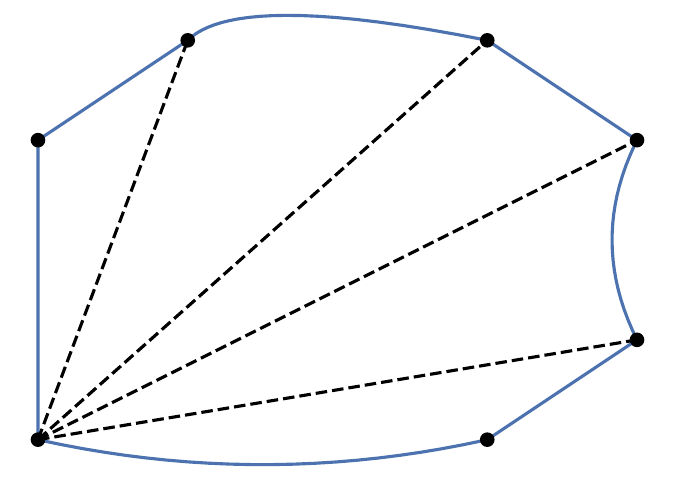}
  \centering
  \captionsetup{width=.75\linewidth}
  \caption{Curved polygon tessellation, done by introducing diagonals
    from a single vertex node.}
  \label{fig:tessellated-curved-polygon}
\end{figure}

Since simple and well established quadrature rules do exist for triangles,
a valid approach would be
to tessellate a curved polygon into valid B\'{e}zier triangles
(Figure~\ref{fig:tessellated-curved-polygon}) and then use substitution as
in~\eqref{eq:mass-mat-subst}. However, tessellation is challenging for both
theoretical and computational reasons.
Theoretical: it's not even clear if an arbitrary curved polygon \emph{can}
be tessellated into B\'{e}zier triangles. Computational: the placement
of diagonals and potential introduction of interior nodes is very
complex in cases where the curved polygon is nonconvex. What's more,
the curved polygon is given only by the boundary, so higher degree triangles
(i.e. cubic and above) introduced during tessellation would need to place
interior control points without causing the triangle to invert. To get a
sense for these challenges, note how the ``simple'' introduction of
diagonals in Figure~\ref{fig:bad-tessellation-required} leads to one
inverted element (the gray element) and another element with area outside
of the curved polygon (the yellow element). Inverted B\'{e}zier triangles
are problematic because the accompanying mapping leaves the boundary
established by the edge curves. For example,
if the tessellation of \(\mathcal{P}\) contains an inverted
B\'{e}zier triangle \(\mathcal{T}_2 = b\left(\utri\right)\) then we'll
need to numerically integrate
\begin{equation}
\int_{\mathcal{T}_2} \phi_0 \phi_1 \, dx \, dy =
  \int_{\utri} \left|\det(Db)\right| \left(\phi_0 \circ b\right)
  \left(\phi_1 \circ b\right) \, ds \, dt.
\end{equation}
If the shape functions are from the pre-image basis (see
Section~\ref{subsec:shape-functions}), then \(\phi_0\)
will not be defined at \(b(s, t) \not\in \mathcal{T}_0\) (similarly for
\(\phi_1\)). Additionally, the absolute value in \(\left|\det(Db)\right|\)
makes the integrand non-smooth since for inverted elements
\(\det(Db)\) takes both signs. If the shape functions are from the
global coordinates basis, then tessellation can be used via an
application of Theorem~\ref{theorem:bad-triangle}, however this involves
more computation than just applying Green's theorem along
\(\partial \mathcal{P}\). By applying the theorem, inverted elements may
be used in a tessellation, for example by introducing artificial
diagonals from a vertex as in Figure~\ref{fig:tessellated-curved-polygon}.
In the global coordinates basis, \(F = \phi_0 \phi_1\) \emph{can}
be evaluated for points in an inverted element that leave
\(\mathcal{T}_0\) or \(\mathcal{T}_1\).

\begin{figure}
  \includegraphics{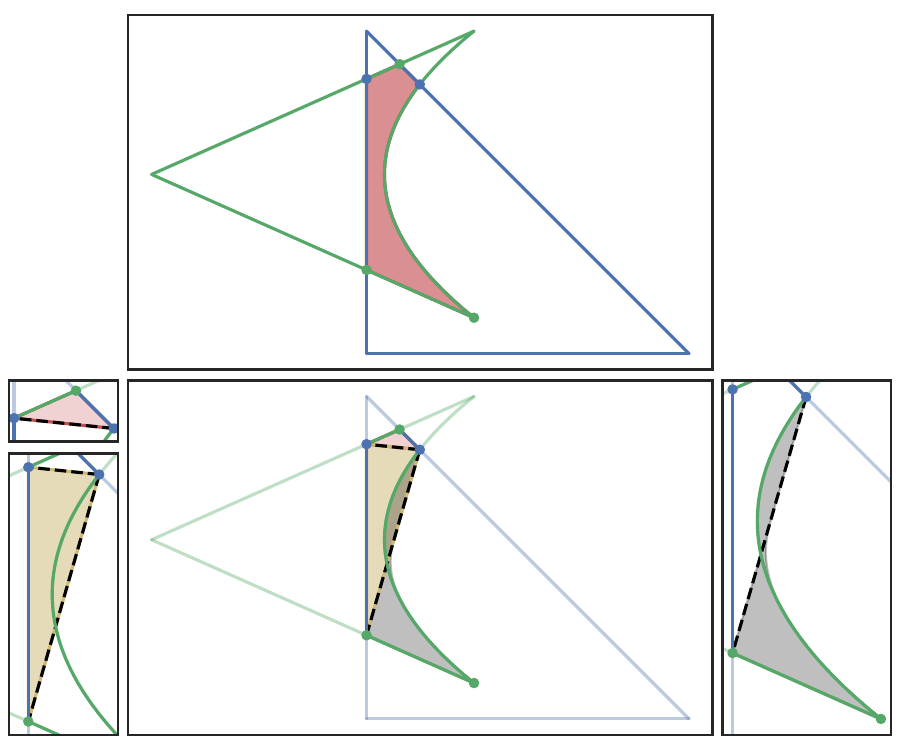}
  \centering
  \captionsetup{width=.75\linewidth}
  \caption{Curved polygon intersection that can't be tessellated
    (with valid B\'{e}zier triangles) by introducing diagonals.}
  \label{fig:bad-tessellation-required}
\end{figure}

Instead, we focus on a Green's theorem based approach.
Define horizontal and vertical antiderivatives
\(H, V\) of the integrand \(F\) such that \(H_x = V_y = F\).
We make these \emph{unique} by imposing the
extra condition that \(H(0, y) \equiv V(x, 0) \equiv 0\).
This distinction is arbitrary, but in order to \emph{evaluate}
\(H\) and \(V\), the univariate functions \(H(0, y)\) and \(V(x, 0)\)
must be specified.
Green's theorem tells us that
\begin{equation}
\int_{\mathcal{P}} 2 F \, dV =
\int_{\mathcal{P}} H_x + V_y \, dV =
\oint_{\partial \mathcal{P}} H \, dy - V \, dx =
\sum_j \int_{C_j} H \, dy - V \, dx.
\end{equation}
For a given curve \(C\) with components \(x(r), y(r)\)
defined on the unit interval, this amounts to having
to integrate
\begin{equation}
G(r) = H(x(r), y(r)) y'(r) - V(x(r), y(r)) x'(r).
\end{equation}
To do this, we'll use Gaussian
quadrature with degree of accuracy sufficient to cover the degree
of \(G(r)\).

If the shape functions are in the global coordinates basis
(Section~\ref{subsec:shape-functions}), then \(G\) will also be a polynomial.
This is because for these shape functions \(F = \phi_0 \phi_1\) will be
polynomial on \(\reals^2\) and so will \(H\) and \(V\) and since each curve
\(C\) is a B\'{e}zier curve segment, the components are also polynomials.
If the shape functions are in the pre-image basis, then \(F\) won't in general
be polynomial, hence standard quadrature rules can't be exact.

To evaluate \(G\), we must also evaluate \(H\) and \(V\) numerically.
For example, since \(H(0, y) \equiv 0\),
the fundamental theorem of calculus tells us that
\(H\left(\alpha, \beta\right) = \int_0^{\alpha} F\left(x, \beta\right) \, dx\).
To compute this integral via Gaussian quadrature
\begin{equation}
H\left(\alpha, \beta\right) \approx \frac{\alpha}{2} \sum_j w_j
  F\left(\frac{\alpha}{2} (x_j + 1), \beta\right)
\end{equation}
we must be able to evaluate \(F\) for
points on the line \(y = \beta\) for \(x\) between \(0\) and \(\alpha\). If
the shape functions are from the pre-image basis, it may not even be possible
to evaluate \(F\) for such points. Since \(\left[\begin{array}{c c} \alpha &
\beta \end{array}\right]^T\) is on the boundary of \(\mathcal{P}\), without
loss of generality
assume it is on the boundary of \(\mathcal{T}_0\). Thus, for some elements
(e.g. if the point is on the bottom of the element), points
\(\left[\begin{array}{c c} \nu & \beta \end{array}\right]^T\) may not be
in \(\mathcal{T}_0\). Since \(\operatorname{supp}(\phi_0) = \mathcal{T}_0\),
we could take \(\phi_0(\nu, \beta) = 0\), but this would make the integrand
non-smooth and so the accuracy of the \emph{exact} quadrature would be
lost. But extending \(\phi_0 = \widehat{\phi}_0 \circ b_0^{-1}\) outside
of \(\mathcal{T}_0\) may be impossible: even though \(b_0\) is bijective on
\(\utri\) it may be many-to-one elsewhere hence \(b_0^{-1}\) can't be
reliably extended outside of \(\mathcal{T}_0\).

Even if the shape functions are from the global coordinates basis,
setting \(H(0, y) \equiv 0\) may introduce quadrature points that
are very far from \(\mathcal{P}\). This can be somewhat addressed by using
\(H(m, y) \equiv 0\) for a suitably chosen \(m\) (e.g. the minimum
\(x\)-value in \(\mathcal{P}\)). Then we have
\(H\left(\alpha, \beta\right) = \int_m^{\alpha} F\left(x, \beta\right) \, dx\).

\section{Numerical Experiments}

\begin{figure}
  \includegraphics{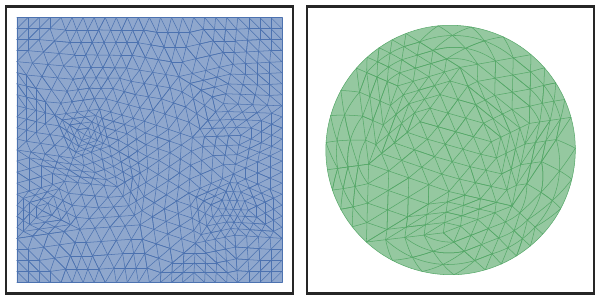}
  \centering
  \captionsetup{width=.75\linewidth}
  \caption{Some example meshes used during the numerical
    experiments; the donor meshs is on the left / blue and the target
    mesh is on the right / green. These meshes are the cubic approximations
    of each domain and have been refined twice.}
  \label{fig:meshes-used-refined}
\end{figure}

A numerical experiment was performed to investigate the observed order
of convergence. Three pairs of random meshes were generated, the donor
on a square of width \(17 / 8\) centered at the origin
and the target on the unit disc. These domains were chosen intentionally so
that the target mesh was completely covered by the donor mesh and the
boundaries did not accidentally introduce ill-conditioned intersection between
elements. The pairs were linear, quadratic and
cubic approximations of the domains. The convergence test was done by
refining each pair of meshes four times and performing solution transfer
at each level. Figure~\ref{fig:meshes-used-refined} shows the cubic
pair of meshes after two refinements.

\begin{figure}
  \includegraphics{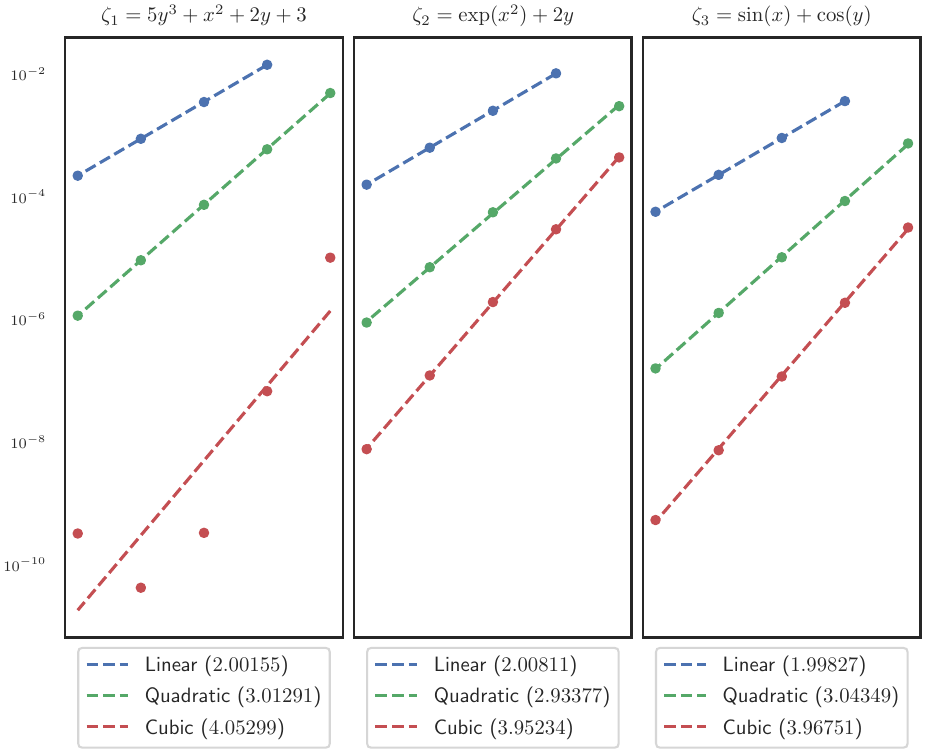}
  \centering
  \captionsetup{width=.75\linewidth}
  \caption{Convergence results for scalar fields on three pairs of related
    meshes: a linear, quadratic and cubic mesh of the same domain.}
  \label{fig:composite-errors}
\end{figure}

Taking after \cite{Farrell2011}, we transfer three
discrete fields in the discontinuous Galerkin (DG) basis.
Each field is derived from one of the smooth scalar functions
\begin{align}
\zeta_1(x, y) &= 5 y^3 + x^2 + 2y + 3 \\
\zeta_2(x, y) &= \exp\left(x^2\right) + 2y \\
\zeta_3(x, y) &= \sin(x) + \cos(y).
\end{align}
For a given mesh size \(h\), we expect that on a degree \(p\) isoparametric
mesh our solution transfer will have \(\bigO{h^{p + 1}}\) errors. To measure
the rate of convergence:
\begin{itemize}
\itemsep 0em
\item Choose a mesh pair \(\mathcal{M}_D\), \(\mathcal{M}_T\) and
a known function \(\zeta(x, y)\).
\item Refine the meshes recursively, starting with
  \(\mathcal{M}_D^{(0)} = \mathcal{M}_D\),
  \(\mathcal{M}_T^{(0)} = \mathcal{M}_T\).
\item Create meshes
\(\mathcal{M}_D^{(j)}\) and \(\mathcal{M}_T^{(j)}\) from
\(\mathcal{M}_D^{(j - 1)}\)
and \(\mathcal{M}_T^{(j - 1)}\) by subdividing each curved
element into four elements.
\item Approximate \(\zeta\) by a discrete field: the nodal
interpolant on the donor mesh \(\mathcal{M}_D^{(j)}\). The
nodal interpolant is constructed by evaluating \(\zeta\) at the
nodes \(\bm{n}_j\) corresponding to each shape function:
\begin{equation}
\bm{f}_j = \sum_i \zeta\left(\bm{n}_i\right) \phi_D^{(i)}.
\end{equation}
\item Transfer \(\bm{f}_j\) to the discrete field \(\bm{g}_j\) on
the target mesh  \(\mathcal{M}_T^{(j)}\).
\item Compute the relative error on \(\mathcal{M}_T^{(j)}\):
\(E_j = \| \bm{g}_j - \zeta \|_2 / \| \zeta \|_2\) (here \(\| \cdot \|_2\)
is the \(L_2\) norm on the target mesh).

We should instead be measuring \(\| \bm{g}_j - \bm{f}_j \|_2 /
\| \bm{f}_j \|_2\), but \(E_j\) is much easier to
compute for \(\zeta(x, y)\) that are straightforward to evaluate.
Due to the triangle inequality \(E_j\) can be a reliable proxy
for the actual projection error, though when
\(\| \bm{f}_j - \zeta \|_2\) becomes too large it will dominate
the error and no convergence will be observed.
\end{itemize}

Convergence results are shown in Figure~\ref{fig:composite-errors}
and confirm the expected orders. Since \(\zeta_1\) is a cubic polynomial,
one might expect the solution transfer on the cubic mesh to be \emph{exact}.
This expectation would hold in the superparametric case (i.e. where the
elements are straight sided but the shape functions are higher degree).
However, when the elements aren't straight \rev{sided} the isoparametric mapping
changes the underlying function space.

\section{Conclusion}

This paper has described a method for conservative interpolation
between curved meshes. The transfer process conserves globally
to machine precision since we can use exact quadratures for all
integrals. The primary source of error comes from solving the linear
system with the mass matrix for the target mesh. This allows
less restrictive usage of mesh adaptivity, which can make computations
more efficient. Additionally, having a global transfer algorithm
allows for remeshing to be done less frequently.

The algorithm breaks down into three core subproblems: B\'{e}zier triangle
intersection, a expanding front for intersecting elements and
integration on curved polygons. The inherently local nature of the
expanding front allows the algorithm to be parallelized via domain
decomposition with little data shared between processes. By
restricting integration to the intersection of elements from the
target and donor meshes, the algorithm can accurately transfer
both continuous and discontinuous fields.

As mentioned in the preceding chapters, there are several research
directions possible to build upon the solution transfer algorithm.
The usage of Green's theorem nicely extends to \(\reals^3\) via
Stokes' theorem, but the B\'{e}zier triangle intersection algorithm
is specific to \(\reals^2\). The equivalent B\'{e}zier tetrahedron
intersection algorithm is significantly more challenging.

The restriction to shape functions from the global coordinates basis
is a symptom of the method and not of the inherent problem. The
pre-image basis has several appealing properties, for example
this basis can be precomputed on \(\utri\). The problem of a
valid tessellation of a curved polygon warrants more exploration.
Such a tessellation algorithm would enable usage of the pre-image
basis.

The usage of the global coordinates basis does have some benefits.
In particular, the product of shape functions from different meshes
is still a polynomial in \(\reals^2\). This means that we could
compute the coefficients of \(F = \phi_0 \phi_1\) directly and
use them to evaluate the antiderivatives \(H\) and \(V\) rather
than using the fundamental theorem of calculus. Even if this
did not save any computation, it may still be preferred over
the FTC approach because it would remove the usage of quadrature
points outside of the domain \(\mathcal{P}\).

\bibliography{paper}
\bibliographystyle{alpha}

\appendix

\section{B\'{e}zier Intersection Problems}\label{sec:bezier-intersection}

\subsection{Intersecting B\'{e}zier Curves}

The problem of intersecting two B\'{e}zier curves is a core building
block for intersecting two B\'{e}zier triangles in \(\reals^2\).
Since a curve is an algebraic variety of dimension one,
the intersections will either be a curve segment common to both curves (if
they coincide) or a finite set of points (i.e. dimension zero).
Many algorithms have been described in the literature, both
geometric (\cite{Sederberg1986, Sederberg1990, Kim1998}) and
algebraic (\cite{Manocha:CSD-92-698}).

In the implementation for this paper, the B\'{e}zier subdivision
algorithm is used.
In the case of a transversal intersection (i.e. one where the
tangents to each curve are not parallel and both are non-zero),
this algorithm performs very well. However, when curves are tangent,
a large number of (false) candidate intersections are detected and
convergence of Newton's method slows once in a neighborhood of an
actual intersection. Non-transversal intersections
have infinite condition number, but transversal intersections with
very high condition number can also cause convergence problems.

\begin{figure}
  \includegraphics{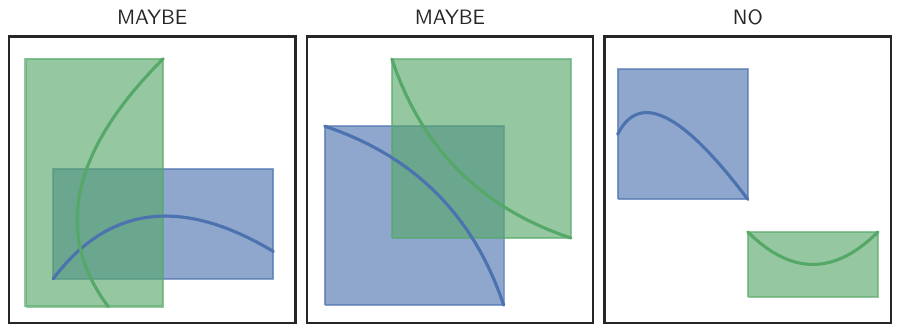}
  \centering
  \captionsetup{width=.75\linewidth}
  \caption{Bounding box intersection predicate. This is a cheap way to
    conclude that two curves don't intersect, though it inherently is
    susceptible to false positives.}
  \label{fig:bounding-box-check}
\end{figure}

In the B\'{e}zier subdivision algorithm, we first check if the
bounding boxes for the curves are disjoint
(Figure~\ref{fig:bounding-box-check}).
We use the bounding boxes
rather than the convex hulls since they are easier to compute and
the intersections of boxes are easier to check.
If they are disjoint, the pair can be rejected. If not, each curve
\(\mathcal{C} = b\left(\left[0, 1\right]\right)\) is split into two halves
by splitting the unit interval: \(b\left(\left[0, \frac{1}{2}\right]\right)\)
and \(b\left(\left[\frac{1}{2}, 1\right]\right)\)
(Figure~\ref{fig:bezier-curve-subdivision}).

\begin{figure}
  \includegraphics{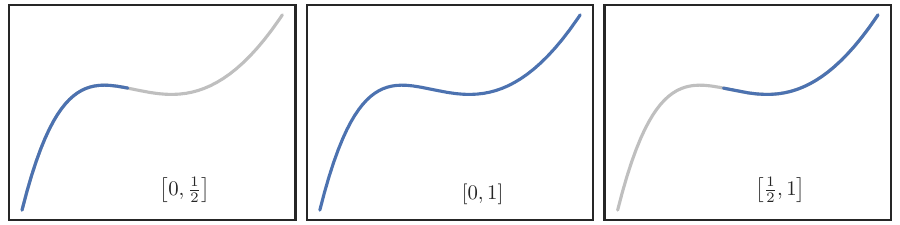}
  \centering
  \captionsetup{width=.75\linewidth}
  \caption{B\'{e}zier curve subdivision.}
  \label{fig:bezier-curve-subdivision}
\end{figure}

\begin{figure}
  \includegraphics{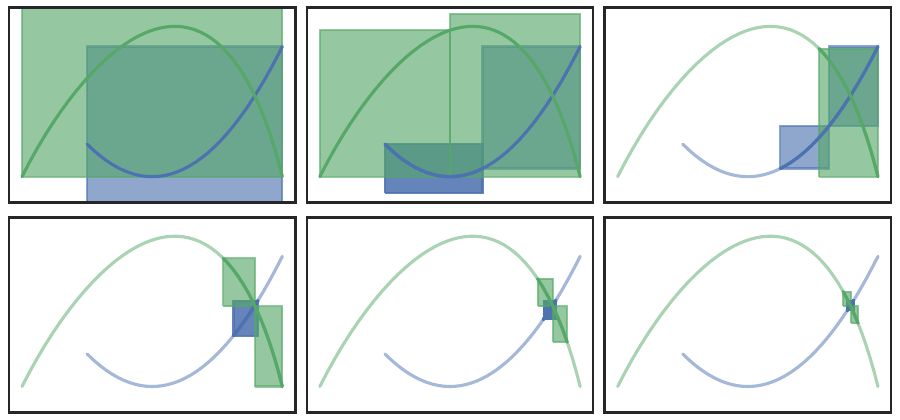}
  \centering
  \captionsetup{width=.75\linewidth}
  \caption{B\'{e}zier subdivision algorithm.}
  \label{fig:bezier-subdivision-process}
\end{figure}

As the subdivision continues,
some pairs of curve segments may be kept around that won't lead to an
intersection (Figure~\ref{fig:bezier-subdivision-process}).
Once the curve segments are close to linear within a given tolerance
(Figure~\ref{fig:bezier-subdivision-linearized}), the process
terminates.

\begin{figure}
  \includegraphics{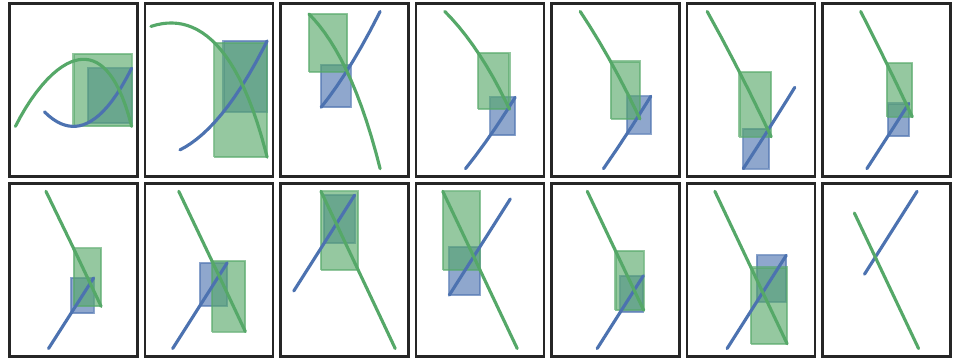}
  \centering
  \captionsetup{width=.75\linewidth}
  \caption{Subdividing until linear within tolerance.}
  \label{fig:bezier-subdivision-linearized}
\end{figure}

Once both curve segments are linear (to tolerance), the intersection is
approximated by intersecting the lines connecting the endpoints of each
curve segment. This approximation is used as a starting point for Newton's
method, to find a root of \(F(s, t) = b_0(s) - b_1(t)\). Since
\(b_0(s), b_1(t) \in \reals^2\) we have Jacobian \(J =
\left[ \begin{array}{c c} b_0'(s) & -b_1'(t) \end{array}\right]\).
With these, Newton's method is
\begin{equation}
\left[ \begin{array}{c c} s_{n + 1} & t_{n + 1} \end{array}\right]^T =
\left[ \begin{array}{c c} s_n & t_n \end{array}\right]^T -
J_n^{-1} F_n.
\end{equation}
This also gives an indication why convergence issues occur at non-transveral
intersections: they are exactly the intersections where the Jacobian is
singular.

\subsection{Intersecting B\'{e}zier Triangles}\label{subsec:intersect-bez-tri}

The chief difficulty in intersecting two surfaces is intersecting their edges,
which are B\'{e}zier curves.
Though this is just a part of the overall algorithm, it proved to be the
\emph{most difficult} to implement (\cite{Hermes2017}). So the first part
of the algorithm is to find all points
where the edges intersect (Figure~\ref{fig:edge-intersections}).

\begin{figure}
  \includegraphics{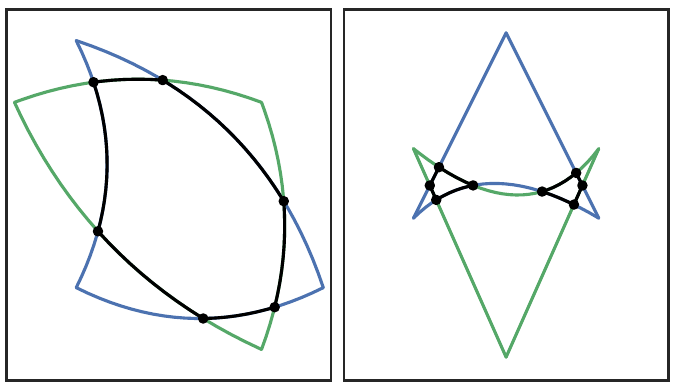}
  \centering
  \captionsetup{width=.75\linewidth}
  \caption{Edge intersections during B\'{e}zier triangle intersection.}
  \label{fig:edge-intersections}
\end{figure}

To determine the curve segments that bound the curved polygon region(s)
(see Section~\ref{subsec:curved-polygons} for more about curved polygons) of
intersection, we not only need to keep track
of the coordinates of intersection, we also need to keep note of
\emph{which} edges the intersection occurred on and the parameters along
each curve.
With this information, we can classify each point of intersection
according to which of the two curves forms the boundary of the
curved polygon (Figure~\ref{fig:intersection-classification}).
Using the right-hand rule we can compare the tangent
vectors on each curve to determine which one is on the interior.

\begin{figure}
  \includegraphics{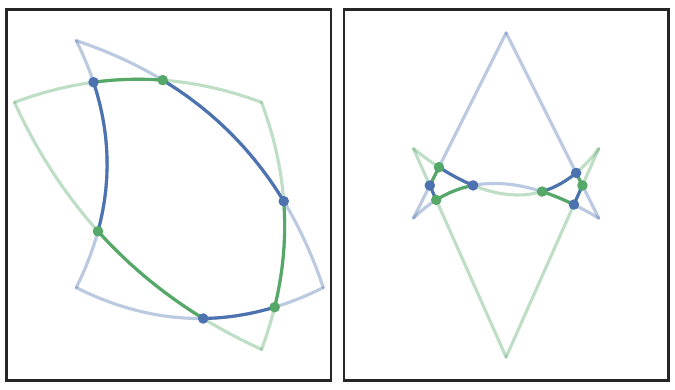}
  \centering
  \captionsetup{width=.75\linewidth}
  \caption{Classified intersections during B\'{e}zier triangle intersection.}
  \label{fig:intersection-classification}
\end{figure}

This classification becomes more difficult when the curves
are tangent at an intersection, when the intersection occurs at a corner
of one of the surfaces or when two intersecting edges are coincident
on the same algebraic curve (Figure~\ref{fig:intersection-difficulties}).

\begin{figure}
  \includegraphics{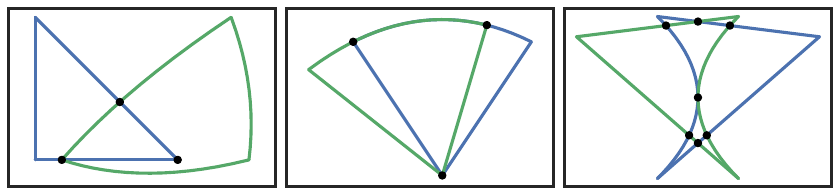}
  \centering
  \captionsetup{width=.75\linewidth}
  \caption{B\'{e}zier triangle intersection difficulties.}
  \label{fig:intersection-difficulties}
\end{figure}

In the case of tangency, the intersection is non-transversal, hence has
infinite condition number. In the case of coincident curves, there are
infinitely many intersections (along the segment when the curves
coincide) so the subdivision process breaks down.

\rev{In all of our experiments, we did not encounter any of these difficulties with our implementation. However, for fail-safe code, additional fallback strategies would be required, and we refer the reader to the extensive literature for further details on covering all possible special cases that might occur \cite{Sederberg1986,Sederberg1990,Kim1998}.}

\subsubsection{Example}

Consider two B\'{e}zier surfaces
(Figure~\ref{fig:surface-surface-example})

\begin{equation}
b_0(s, t) =
\left[ \begin{array}{c}
    8 s \\ 8 t \end{array}\right] \qquad
b_1(s, t) =
\left[ \begin{array}{c}
    2 (6 s + t - 1) \\
    2 (8 s^2 + 8 s t - 8 s + 3 t + 2) \end{array}\right]
\end{equation}
\begin{figure}
  \includegraphics{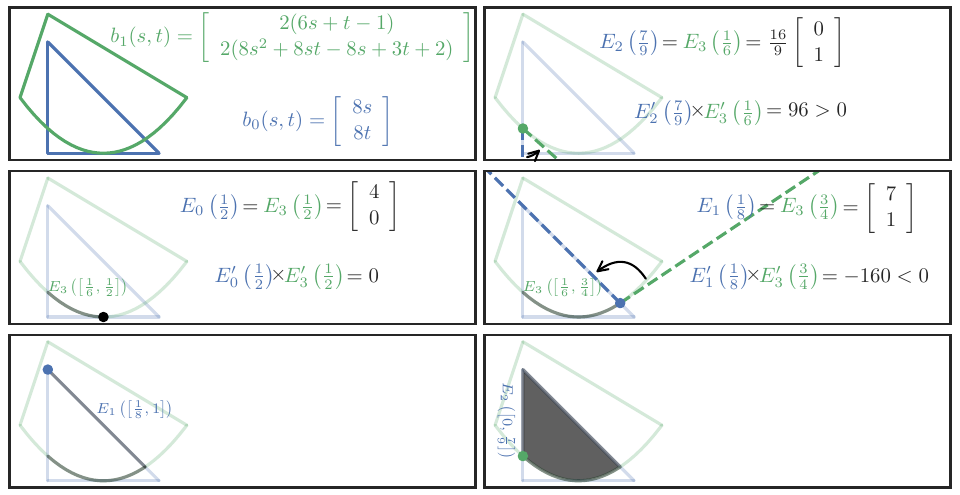}
  \centering
  \captionsetup{width=.75\linewidth}
  \caption{Surface Intersection Example}
  \label{fig:surface-surface-example}
\end{figure}

In the \emph{first step} we find all intersections of the
edge curves
\begin{multline}
E_0(r) = \left[ \begin{array}{c} 8 r \\ 0 \end{array}\right],
E_1(r) = \left[ \begin{array}{c} 8 (1 - r) \\ 8 r \end{array}\right],
E_2(r) = \left[ \begin{array}{c} 0 \\ 8 (1 - r) \end{array}\right], \\
E_3(r) = \left[ \begin{array}{c} 2 (6 r - 1) \\ 4 (2 r - 1)^2
  \end{array}\right],
E_4(r) = \left[ \begin{array}{c} 10 (1 - r) \\ 2 (3 r + 2) \end{array}\right],
E_5(r) = \left[ \begin{array}{c} - 2 r \\ 2 (5 - 3 r) \end{array}\right].
\end{multline}
We find three intersections
and we classify each of them by comparing the tangent vectors
\begin{gather}
  I_1:
  E_2\left(\frac{7}{9}\right) =
E_3\left(\frac{1}{6}\right) = \frac{16}{9}
\left[ \begin{array}{c} 0 \\ 1 \end{array}\right] \Longrightarrow
E_2'\left(\frac{7}{9}\right) \times
E_3'\left(\frac{1}{6}\right) = 96 \\
I_2:
E_0\left(\frac{1}{2}\right) =
E_3\left(\frac{1}{2}\right) =
\left[ \begin{array}{c} 4 \\ 0 \end{array}\right] \Longrightarrow
E_0'\left(\frac{1}{2}\right) \times
E_3'\left(\frac{1}{2}\right) = 0 \\
  I_3:
E_1\left(\frac{1}{8}\right) =
E_3\left(\frac{3}{4}\right) =
\left[ \begin{array}{c} 7 \\ 1 \end{array}\right] \Longrightarrow
E_1'\left(\frac{1}{8}\right) \times
E_3'\left(\frac{3}{4}\right) = -160.
\end{gather}
From here, we construct our curved polygon intersection by drawing
from our list of intersections until none remain.

\begin{itemize}
\itemsep 0em
\item First consider \(I_1\). Since
  \(E_2' \times
  E_3' > 0\)
  at this point, then we consider the curve
  \(E_3\) to be
  \emph{interior}.
\item After classification, we move along
  \(E_3\) until we
  encounter another intersection: \(I_2\)
\item \(I_2\) is a point of tangency since
  \(E_0'\left(\frac{1}{2}\right) \times
  E_3'\left(\frac{1}{2}\right) = 0\).
  Since a tangency has no impact on
  the underlying intersection geometry, we ignore it and
  keep moving.
\item Continuing to move along
  \(E_3\), we
  encounter another intersection: \(I_3\).
  Since
  \(E_1' \times
  E_3' < 0\)
  at this point, we consider the curve
  \(E_1\) to be
  \emph{interior} at the intersection. Thus we stop moving
  along \(E_3\)
  and we have our first curved segment:
  \(E_3\left(\left[
    \frac{1}{6}, \frac{3}{4}\right]\right)\)
\item Finding no other intersections on \(E_1\)
  we continue until the end of the edge.
  Now our (ordered) curved segments are:
  \begin{equation}
  E_3\left(\left[
    \frac{1}{6}, \frac{3}{4}\right]\right) \longrightarrow
  E_1\left(\left[
    \frac{1}{8}, 1\right]\right).
  \end{equation}
\item Next we stay at the corner and switch to the next curve
  \(E_2\), moving along that curve
  until we hit the next intersecton \(I_1\).
  Now our (ordered) curved segments are:
  \begin{equation}
  E_3\left(\left[
    \frac{1}{6}, \frac{3}{4}\right]\right) \longrightarrow
  E_1\left(\left[
    \frac{1}{8}, 1\right]\right) \longrightarrow
  E_2\left(\left[
    0, \frac{7}{9}\right]\right).
  \end{equation}
  Since we are now back where we started (at \(I_1\))
  the process stops
\end{itemize}
We represent the boundary of the curved polygon as B\'{e}zier curves, so
to complete the process we reparameterize (\cite[Ch.~5.4]{Farin2001}) each
curve onto the relevant interval. For example,
\(E_3\) has control points
\(p_0 = \left[ \begin{array}{c} -2 \\ 4 \end{array}\right]\),
\(p_1 = \left[ \begin{array}{c} 4 \\ -4 \end{array}\right]\),
\(p_2 = \left[ \begin{array}{c} 10 \\ 4 \end{array}\right]\)
and we reparameterize on \(\alpha = \frac{1}{6}, \beta = \frac{3}{4}\) to
control points
\begin{align}
  q_0 &= E_3\left(\frac{1}{6}\right) =
  \frac{16}{9} \left[ \begin{array}{c} 0 \\ 1 \end{array}\right] \\
  q_1 &= (1 - \alpha) \left[(1 - \beta) p_0 + \beta p_1\right] +
   \alpha \left[(1 - \beta) p_1 + \beta p_2\right] = \frac{1}{6} \left[
    \begin{array}{c} 21 \\ -8 \end{array}\right] \\
  q_2 &= E_3\left(\frac{3}{4}\right) = \left[
    \begin{array}{c} 7 \\ 1 \end{array}\right].
\end{align}

\subsection{B\'{e}zier Triangle Inverse}

The problem of determining the parameters \((s, t)\) given a point
\(\bm{p} = \left[\begin{array}{c c} x & y\end{array}\right]^T\)
in a B\'{e}zier triangle can also be solved by using
subdivision with a bounding box predicate and then Newton's method
at the end.

\begin{figure}
  \includegraphics{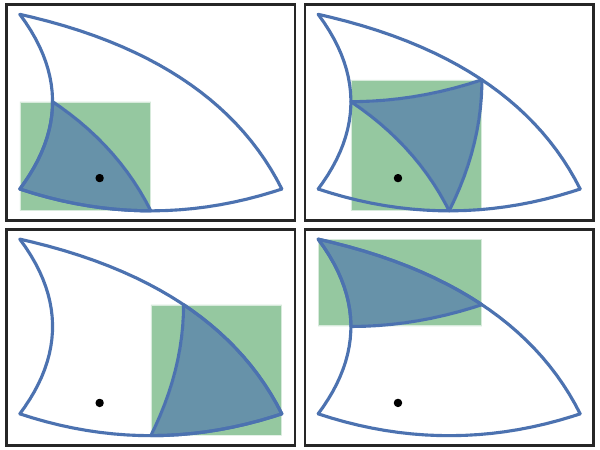}
  \centering
  \captionsetup{width=.75\linewidth}
  \caption{Checking for a point \(\bm{p}\) in each of four subregions
    when subdividing a B\'{e}zier triangle.}
  \label{fig:locate-in-triangle}
\end{figure}

For example, Figure~\ref{fig:locate-in-triangle} shows
how regions of \(\utri\) can be discarded recursively until the
suitable region for \((s, t)\) has a sufficiently small area. At
this point, we can apply Newton's method to the map \(F(s, t) =
b(s, t) - \bm{p}\). It's very helpful (for Newton's method) that
\(F: \reals^2 \longrightarrow \reals^2\) since the Jacobian will
always be invertible
when the B\'{e}zier triangle is valid. If \(\bm{p} \in \reals^3\)
then the system would be underdetermined. Similarly, if
\(\bm{p} \in \reals^2\) but \(b(s)\) is a B\'{e}zier curve then the
system would be overdetermined.

\section{Allowing Tessellation with Inverted Triangles}

\begin{theorem}\label{theorem:bad-triangle}
Consider three smooth curves
\(b_0, b_1, b_2\) that form a closed loop: \(b_0(1) = b_1(0)\),
\(b_1(1) = b_2(0)\) and \(b_2(1) = b_0(0)\).
Take \emph{any} smooth map \(\varphi(s, t)\) on \(\utri\) that
sends the edges to the three curves:
\begin{equation}
\varphi(r, 0) = b_0(r), \quad \varphi(1 - r, r) = b_1(r),
  \quad \varphi(0, 1 - r) = b_2(r) \quad \text{for } r \in \left[0, 1\right].
\end{equation}
Then we must have
\begin{equation}
2 \int_{\utri} \det(D\varphi) \left[F \circ \varphi\right] \, dt \, ds =
\oint_{b_0 \cup b_1 \cup b_2} H \, dy - V \, dx
\end{equation}
for antiderivatives that satisfy \(H_x = V_y = F\).

When \(\det(D\varphi) > 0\), this is just the change of variables
formula combined with Green's theorem.
\end{theorem}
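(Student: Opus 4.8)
The plan is to reduce the general (possibly orientation-reversing or degenerate) case to the classical case by a careful bookkeeping of signs, rather than by any analytic estimate. First I would observe that the right-hand side $\oint_{b_0 \cup b_1 \cup b_2} H\,dy - V\,dx$ depends only on the three boundary curves, not on the particular extension $\varphi$; so it suffices to show the left-hand side is independent of $\varphi$ as well, and then evaluate it for one convenient choice. The most convenient choice is an \emph{affine} or otherwise explicitly orientation-preserving $\varphi_0$ onto some genuine region (for instance, first handle the case where $b_0, b_1, b_2$ are the straight edges of a nondegenerate triangle, where $\varphi_0$ is the identity-type map and $\det(D\varphi_0) > 0$, so the identity is exactly ``change of variables plus Green's theorem'' as the statement already concedes), and then argue that replacing $\varphi_0$ by an arbitrary $\varphi$ with the same boundary data does not change $\int_{\utri}\det(D\varphi)\,[F\circ\varphi]$.

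The key step is therefore a homotopy/divergence argument. Writing $F = H_x$ (I would do the computation with $H$ alone and note $V$ is symmetric), one checks that $\det(D\varphi)\,(H_x\circ\varphi)$ is itself a divergence in the $(s,t)$ variables: specifically $\det(D\varphi)\,(H_x\circ\varphi) = \partial_s\big( (H\circ\varphi)\,\varphi^{(2)}_t \big) - \partial_t\big( (H\circ\varphi)\,\varphi^{(2)}_s \big)$, where $\varphi^{(2)}$ is the second component $y(s,t)$ of $\varphi$; this is just the chain rule expansion of the right-hand side, using $\partial_s\partial_t \varphi^{(2)} = \partial_t\partial_s\varphi^{(2)}$ for smooth $\varphi$. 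Then Green's theorem on $\utri$ turns $\int_{\utri}\det(D\varphi)\,(H_x\circ\varphi)\,dt\,ds$ into a line integral over $\partial\utri$ of $(H\circ\varphi)\,dy$, and since $\varphi$ restricted to $\partial\utri$ is exactly $b_0 \cup b_1 \cup b_2$ by hypothesis, this line integral is $\oint_{b_0\cup b_1\cup b_2} H\,dy$ regardless of what $\varphi$ does in the interior or of the sign of $\det(D\varphi)$. Doing the same for $V$ and subtracting gives the claimed identity; the factor of $2$ appears because $H_x + V_y = 2F$.

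The point where I would be most careful — the main obstacle — is justifying that $\det(D\varphi)\,(F\circ\varphi)$ really is a pointwise exact divergence \emph{without} any hypothesis on $\det(D\varphi)$, and then that Green's theorem applies. Smoothness of $\varphi$ on $\utri$ (hence $C^1$ up to the boundary and with continuous mixed second partials) is what makes the mixed-partial cancellation legitimate; I would state the needed regularity explicitly. One also needs $F$ smooth enough that $H, V$ and $F\circ\varphi$ are continuous on the relevant sets — here $F = \phi_0\phi_1$ in the global coordinates basis is polynomial, so this is automatic, but I would remark that the identity holds whenever the antiderivatives exist and the compositions are $C^1$. A secondary subtlety is purely notational: writing the flux form $H\,dy - V\,dx$ consistently and tracking that the orientation of $\partial\utri$ induced by Green's theorem matches the loop orientation $b_0 \to b_1 \to b_2$ given in the hypothesis, so no stray sign is introduced. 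Everything else is the chain rule and is routine.
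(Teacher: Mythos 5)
Your proposal is correct and is essentially the paper's own argument: the paper likewise pulls the form back to the flat domain, defining \(\Delta S = H\,y_s - V\,x_s\) and \(\Delta T = H\,y_t - V\,x_t\), verifies \(\partial_s \Delta T - \partial_t \Delta S = 2\det(D\varphi)\left[F\circ\varphi\right]\) by the chain rule and equality of mixed partials, and applies Green's theorem on \(\utri\) itself so that the sign of \(\det(D\varphi)\) never enters. The preliminary reduction in your first paragraph (showing independence of \(\varphi\) and then evaluating for a convenient choice) is an unnecessary detour, since the divergence computation in your second paragraph already handles an arbitrary \(\varphi\) directly.
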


\begin{proof}
Let \(x(s, t)\) and \(y(s, t)\) be the components of \(\varphi\). Define
\begin{equation}
\Delta S = H(x, y) y_s - V(x, y) x_s \quad \text{and} \quad
\Delta T = H(x, y) y_t - V(x, y) x_t.
\end{equation}
On the unit triangle \(\utri\), Green's theorem gives
\begin{equation}\label{eq:basic-greens}
\int_{\mathcal{U}} \left[\partial_s \Delta T -
  \partial_t \Delta S\right] \, dV =
\oint_{\partial \mathcal{U}} \Delta S \, ds + \Delta T \, dt.
\end{equation}
The boundary \(\partial \utri\) splits into the bottom edge \(E_0\),
hypotenuse \(E_1\) and left edge \(E_2\).

Since
\begin{equation}
E_0 = \left\{ \left[ \begin{array}{c} r \\ 0 \end{array}\right] \mid
  r \in \left[0, 1\right] \right\}
\end{equation}
we take \(\varphi(r, 0) = b_0(r)\) hence
\begin{equation}
dx = x_s \, dr, dy = y_s \, dr \Longrightarrow
H dx - V dy = \Delta S \, dr.
\end{equation}
We also have \(ds = dr\) and \(dt = 0\) due to the
parameterization, thus
\begin{equation}
\int_{E_0} \Delta S \, ds + \Delta T \, dt =
  \int_{r = 0}^{r = 1} \Delta S \, dr = \int_{b_0} H \, dx - V \, dy.
\end{equation}
We can similarly verify that
\(\int_{E_j} \Delta S \, ds + \Delta T \, dt = \int_{b_j} H \, dx - V \, dy\)
for the other two edges. Combining this with~\eqref{eq:basic-greens}
we have
\begin{equation}
\int_{\mathcal{U}} \left[\partial_s \Delta T -
  \partial_t \Delta S\right] \, dV =
\oint_{b_0 \cup b_1 \cup b_2} H \, dx - V \, dy.
\end{equation}
To complete the proof, we need
\begin{equation}
\int_{\mathcal{U}} \left[\partial_s \Delta T -
  \partial_t \Delta S\right] \, dV =
2 \int_{\mathcal{U}} \det(D\varphi) \left[F \circ \varphi\right] \, dV
\end{equation}
but one can show directly that
\begin{equation}
\partial_s \Delta T - \partial_t \Delta S =
  2 \left(x_s y_t - x_t y_s\right) F(x, y) =
  2 \det(D\varphi) \left[F \circ \varphi\right]. \tag*{\qedhere}
\end{equation}
\end{proof}

\end{document}